\newcommand{\comment}[1]{}
\newtheorem{lem}{Lemma}
\newtheorem{propn}{Proposition}
\newtheorem{thm}{Theorem}
\theoremstyle{remark}
\theoremstyle{definition}
\newcommand{\R}{\mathbb R}
\newcommand{\Z}{\mathbb Z}
\newcommand{\T}{\mathbb T}
\newcommand{\N}{\mathbb N}
\DeclareMathOperator{\supp}{supp}
\DeclareMathOperator{\lcm}{lcm}
\newcommand{\A}{\alpha}
\newcommand{\la}{\lambda}
\newcommand{\lm}{\lambda}
\newcommand{\be}{\begin{equation}}
\newcommand{\ee}{\end{equation}}
\newcommand{\eq}{\begin{equation}}
\newcommand{\bee}{\begin{equation*}}
\newcommand{\eee}{\end{equation*}}
\begin{document}
\title{The Discrete Spherical Maximal Function: \\ A new proof of $\ell^2$-boundedness}
\author{Neil Lyall \quad\quad  \'{A}kos Magyar  \quad\quad Alex Newman \quad\quad Peter Woolfitt}
\thanks{The first and second authors were partially supported by grants NSF-DMS 1702411 and NSF-DMS 1600840, respectively.}

\address{Department of Mathematics, The University of Georgia, Athens, GA 30602, USA}
\email{lyall@math.uga.edu}
\email{magyar@math.uga.edu}
\email{alxjames@uga.edu}
\email{pwoolfitt@uga.edu}

\subjclass[2010]{42B25}

\begin{abstract} 
We provide a new direct proof of the $\ell^2$-boundedness of the Discrete Spherical Maximal Function that neither relies on abstract transference theorems (and hence Stein's Spherical Maximal Function Theorem) nor on delicate asymptotics  for the Fourier transform of discrete spheres.
 \end{abstract}
\maketitle

\setlength{\parskip}{3pt}

\section{Introduction}

The study of discrete analogues of central constructs of Euclidean harmonic analysis, initiated by Bourgain \cite{B1}, has grown into a vast, active area of research. An important result in this development is the $\ell^p$-boundedness of the so-called discrete spherical maximal function \cite{MSW}.

Beyond its own intrinsic interest, this operator, or more precisely certain ``mollified variants", play a crucial role in studying certain geometric point configurations in positive density subsets of the integer lattice, see \cite{LMdist}.

Let $d\geq 5,\,\la^2\in\N$, and $N_\la :=|\{m\in\Z^d:\ |m|=\la\}|.$ It is well-known, see for example \cite{V}, that \[c_d\la^{d-2}\leq N_\la \leq C_d\la^{d-2}\] for some constants $0<c_d<C_d$. 
For $f:\Z^d\to\R$ define the discrete spherical averages
\bee
A_\la f(n)= N_\la^{-1} \sum_{|m|=\la} f(n-m)
\eee
and the maximal operator
\bee
A_\ast f(n)= \sup_{\la} |A_\la f(n)|.\eee

The variables $n,m$ in the two equations above, and throughout this short note, are always assumed to be in $\Z^d$. Furthermore, the parameter $\la$ will always be assumed to satisfy $\la^2\in\N$. 

In \cite{MSW} it was shown that for $p>d/(d-2)$ one has the estimate
\bee
\|A_\ast f\|_p \leq C_{p,d}\,\|f\|_p
\eee
where $\|f\|_p = (\sum_x |f(x)|^p)^{1/p}$ 
denotes the $\ell^p(\Z^d)$ norm of the function $f$.
It was further noted in \cite{MSW} that the condition that $d\geq 5$ and $p>d/(d-2)$ are both sharp.

The approach taken in \cite{MSW} had three main steps. The first step was to approximate $A_\lm$ by an infinite sum of simpler operators $M_\lm^{a/q}$, each associated to a reduced fraction $a/q$, with $0<a/q\leq1$. A general abstract transference theorem, which allows one to pass from certain convolution operators on $\R^d$ to analogous operators on $\Z^d$, was then used to analyze each $M_\lm^{a/q}$. In particular, this approach makes use of Stein's Spherical Maximal Function Theorem  \cite{St}. The final step of the argument is to show that the approximation taken in the first step is adequate, this step uses the full asymptotic expansion for the Fourier transform of (the indicator function of) the discrete sphere of radius $\lm$ in $\Z^d$.

In this note we provide a short direct proof  of the $\ell^2$ case of the main result in \cite{MSW}. Our direct proof relies on the observation that one obtains gains in $\ell^2$ for maximal operators at a single dyadic scale, when applied to functions whose Fourier transform is suitably localized away from rational points with suitably small denominators, specifically Proposition \ref{MainProp} below. This combined with an almost orthogaonality argument quickly leads to the proof Theorem 1 below. Note that we do not need the full asymptotic expansion of the underlying multipliers neither any transference arguments to utilise Stein's spherical maximal theorem.

Our main result is the following,

\begin{thm}\label{Thm1}
If $d\geq5$, then
\bee
\|A_\ast f\|_2 \leq C_{d}\,\|f\|_2.
\eee
\end{thm}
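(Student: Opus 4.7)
The plan is to dyadically decompose the supremum over $\lambda$ and reduce Theorem~\ref{Thm1} to the single-scale estimate
\bee
\bigl\|\sup_{\lambda^2\sim 2^j}|A_\lambda f|\bigr\|_2\;\le\;C\,2^{-\delta j}\|f\|_2
\eee
for some $\delta>0$ and every $j\ge 1$; summing in $j$ together with the trivial bound $\|A_\lambda f\|_2\le\|f\|_2$ at the smallest scales then gives the theorem.

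For each dyadic block I would use a Hardy--Littlewood circle method decomposition. Fix $\eps>0$ small and set $Q_j:=2^{\eps j}$. Split
\bee
A_\lambda \;=\;\sum_{q\le Q_j}M^q_\lambda\;+\;E_{\lambda,j},
\eee
where $M^q_\lambda$ collects the smoothly truncated pieces of the multiplier concentrated near rationals $a/q$ with $\gcd(a,q)=1$, and $E_{\lambda,j}$ is the minor arc remainder whose Fourier transform is supported away from all rationals with denominator at most $Q_j$. Proposition~\ref{MainProp}, applied with parameter $Q_j$, should handle $E_{\lambda,j}$ and yield $\|\sup_{\lambda^2\sim 2^j}|E_{\lambda,j}f|\|_2\le C\,Q_j^{-\alpha}\|f\|_2$ for some $\alpha>0$, which is $2^{-\eps\alpha j}\|f\|_2$ as desired.

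For the major arc operators $M^q_\lambda$, two features will drive the argument. First, the Gauss-sum normalisation $|G(a,q)|\lesssim q^{-d/2}$, combined with the $\phi(q)$ coprime residues mod $q$, should give $\|M^q_\lambda\|_{\ell^2\to\ell^2}\lesssim q^{1-d/2}$, which is summable in $q$ precisely when $d\ge 5$. Second, at fixed $q$ the multiplier of $M^q_\lambda$ is smooth and varies slowly in $\lambda$, so inside the block I would telescope $M^q_\lambda f=M^q_{\lambda_j}f+(M^q_\lambda f-M^q_{\lambda_j}f)$: the base points $M^q_{\lambda_j}f$ are controlled across all blocks by a single Plancherel/square-function estimate, while the differences are handled by a Rademacher--Menshov type $\ell^2$ bound, both producing summable decay in $j$.

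\textbf{Main obstacle.} The key difficulty is extracting the gain in $j$ from the major arc part: individually each $M^q_\lambda$ is only $\ell^2$-bounded, with no obvious decay in the scale. The required factor $2^{-\delta j}$ must come entirely from the slow $\lambda$-variation of the (continuous) spherical multiplier over the block $\lambda^2\sim 2^j$, and this has to be quantified uniformly across the $\sim Q_j^2$ major arcs. Balancing the polynomial-in-$Q_j$ loss from summing over arcs against the Plancherel gain from smoothness, using $d\ge 5$, is exactly where the argument will require care.
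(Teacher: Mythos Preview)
The core reduction you propose is false. You want $\bigl\|\sup_{\lambda^2\sim 2^j}|A_\lambda f|\bigr\|_2\le C\,2^{-\delta j}\|f\|_2$ for \emph{every} $f$ and some fixed $\delta>0$. Take $f=1_B$ for a ball $B$ of radius $R$ in $\Z^d$. For any $\lambda\ll R$ one has $A_\lambda f(n)=1$ whenever $n$ is deep inside $B$, so $\bigl\|\sup_{\lambda^2\sim 2^j}|A_\lambda f|\bigr\|_2\ge c\,\|f\|_2$ for every $j$ with $2^{j/2}\ll R$; letting $R\to\infty$ rules out any power decay in $j$. The obstruction is precisely the $q=1$ major arc, i.e.\ the low-frequency part of $f$: when $\widehat f$ is concentrated near the origin, $M^1_\lambda f\approx f$ independently of $\lambda$, so neither a base-point square function $\sum_j\|M^1_{\lambda_j}f\|_2^2$ nor any Rademacher--Menshov telescoping can produce decay there. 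You correctly flag this as the ``main obstacle'', but the proposed fix does not resolve it.

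The paper avoids the issue by decomposing the \emph{function} rather than the operator, and by not seeking power decay in the scale at all. One writes $f=f*\Psi_{k,0}+\sum_{j=0}^{J_k-1}f*\Delta\Psi_{k,j}+(f-f*\Psi_{k,J_k})$ with $J_k\approx\log_2 k$. The low-frequency piece $f*\Psi_{k,0}$ is dominated \emph{pointwise}, uniformly in $k$, by the discrete Hardy--Littlewood maximal function; this is a sup bound, not a decaying one. The high-frequency tail $f-f*\Psi_{k,J_k}$ has Fourier support in $\Omega_{J_k,k}^c$, and Proposition~\ref{MainProp} with $j=J_k$ gives decay of order $k^{-1/2}(\log k)^{-1}$, which is $\ell^2$-summable in $k$. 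The intermediate pieces $f*\Delta\Psi_{k,j}$ are handled not by decay in $k$ but by an \emph{almost orthogonality} in $k$ at fixed $j$ (Lemma~\ref{orthoK}: $\sum_{k}|\widehat{\Delta\Psi}_{k,j}|^2\le C$), combined with the $2^{-j/2}$ gain from Proposition~\ref{MainProp}; Minkowski in $j$ finishes the double sum. Thus summability over dyadic scales comes from orthogonality of the frequency projections, not from a single-scale operator norm that decays in the scale---which, as the example above shows, cannot exist.
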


\section{Key estimates for maximal operators at a single dyadic scale}

Recall that for  $f\in \ell^1(\mathbb{Z}^d)$ we define its \textit{Fourier transform} $\widehat{f}: \mathbb{T}^d\to\mathbb{C}$ by
$$\widehat{f}(\A)=\sum_{n\in\mathbb{Z}^d}f(n)e^{-2\pi i n\cdot\A}.$$

Before stating Proposition \ref{MainProp} we need to introduce some additional notation. For any  integer $j\geq0$ we let
$
q_j=\lcm\{1, 2, \dots,2^j\}
$
and note that $q_j\asymp e^{2^j}$. For any non-negative integers $j$ and $k$ that satisfy $2^j\leq k$ , we let
\be
\Omega_{j,k}:=\{\A\in\mathbb{T}^d: \A \in[-2^{j-k},2^{j-k}]^d+(q_j^{-1}\mathbb{Z})^d\}.
\ee

\smallskip

 \begin{propn}\label{MainProp}
 If $d\geq 5$, $k\in\mathbb{N}$, and $1\leq j\leq \log_2(k)-2$, then one has the estimate
 \begin{equation}
     \Bigl\|\sup_{2^k\leq\lambda\leq2^{k+1}} |A_\la f|\Bigr\|_2\ll 2^{-j/2}j^{-1}\|f\|_2
 \end{equation}
whenever $\supp \widehat{f}\subseteq \Omega_{j,k}^c$, where $\Omega_{j,k}^c$ denotes the complement of $\Omega_{j,k}$.
 \end{propn}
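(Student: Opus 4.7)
The plan is to reduce everything to a Fourier multiplier estimate via Plancherel and then handle the supremum over $\la$ by a square-function argument. By Plancherel,
\[
\|A_\la f\|_2^2 = \int_{\T^d} |\widehat{\sigma_\la}(\A)|^2\,|\widehat{f}(\A)|^2 \, d\A,
\]
so a pointwise bound on $|\widehat{\sigma_\la}(\A)|$ on $\Omega_{j,k}^c$ directly produces an $\ell^2$-bound on $A_\la f$ for each single $\la$.

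For the pointwise multiplier estimate I would invoke the Kloosterman refinement of the circle method, relying only on the classical Gauss sum bound $|G(a,q)| \ls q^{-d/2}$ and the stationary phase decay $|\widehat{d\sigma_\la}(\xi)| \ls (\la|\xi|)^{-(d-1)/2}$, and not the full asymptotic expansion. For $\A \in \Omega_{j,k}^c$, the definition gives $|\A - a/q| \geq 2^{j-k}$ for every $q \mid q_j$, and in particular for every $q \leq 2^j$; combined with $\la \geq 2^k$ this produces the pointwise decay $(\la \cdot 2^{j-k})^{-(d-1)/2} \leq 2^{-j(d-1)/2}$ for the major-arc pieces. The contribution from $q > 2^j$ is controlled by $\sum_{q>2^j} q^{-d/2} \ls 2^{-j(d-2)/2}$. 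For $d \geq 5$, the slower of these two bounds yields
\[
\sup_{\la \in [2^k,\,2^{k+1}]}\,|\widehat{\sigma_\la}(\A)| \ls 2^{-j/2},\quad \A \in \Omega_{j,k}^c,
\]
and hence $\|A_\la f\|_2 \ls 2^{-j/2}\|f\|_2$ for each single $\la$.

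To pass from the single-scale bound to the supremum, parametrize by $\mu = \la^2 \in [2^{2k}, 2^{2k+2}]$ and set $B_\mu := A_{\sqrt\mu}$. The telescoping identity
\[
\sup_\mu |B_\mu f(n)|^2 \leq |B_{2^{2k}}f(n)|^2 + 2\sum_\mu |B_\mu f(n)|\,|B_{\mu+1}f(n) - B_\mu f(n)|
\]
together with Cauchy--Schwarz and Plancherel reduces the matter to bounding $\sup_{\A \in \Omega_{j,k}^c}\sum_\mu |\widehat{\sigma_{\sqrt\mu}}(\A)|^2$ and $\sup_{\A \in \Omega_{j,k}^c}\sum_\mu |\widehat{\sigma_{\sqrt{\mu+1}}}(\A) - \widehat{\sigma_{\sqrt\mu}}(\A)|^2$. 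The discrete differences are approximated, via the circle method, by $(2\sqrt\mu)^{-1}\p_\la \widehat{\sigma_\la}(\A)$, where $|\p_\la \widehat{\sigma_\la}(\A)| \ls q^{-d/2}|\B|(\la|\B|)^{-(d-1)/2}$ with $\B = \A - a/q$, providing the additional decay in $\la \sim 2^k$ needed when summing in $\mu$.

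The main technical obstacle will be the bookkeeping required to extract the extra $j^{-1}$ factor on top of the bare $2^{-j/2}$. The crude pointwise estimate gives $2^{-j/2}$ directly, but the $j^{-1}$ refinement should emerge from an $\ell^2$-orthogonality argument across the $\log_2 2^j = j$ many dyadic shells of denominators $q \in [2^l,2^{l+1})$ for $0 \leq l \leq j$, exploiting the (nearly) disjoint Fourier supports of the cut-offs $\chi_q$ in the circle-method decomposition; alternatively, the $j^{-1}$ gain may be produced by a careful use of oscillation cancellation within the square function over $\mu$, an approach that genuinely leverages the hypothesis $j \leq \log_2(k) - 2$ to ensure enough oscillations across the dyadic scale $[2^k,2^{k+1}]$.
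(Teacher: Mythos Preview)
Your outline diverges from the paper's argument at the two places that matter, and in both places the proposal has a genuine gap rather than just a different route.

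\textbf{Handling the supremum in $\la$.} Your telescoping/square-function reduction hinges on the claim that the discrete difference $\widehat{\sigma_{\sqrt{\mu+1}}}(\A)-\widehat{\sigma_{\sqrt\mu}}(\A)$ is governed by $(2\sqrt\mu)^{-1}\,\p_\la\widehat{\sigma_\la}(\A)$ and that $|\p_\la\widehat{\sigma_\la}(\A)|\ls q^{-d/2}|\B|(\la|\B|)^{-(d-1)/2}$. This is false as stated: in the circle-method expansion the dominant $\la$-dependence sits in the arithmetic phase $e^{-2\pi i\la^2 a/q}=e^{-2\pi i\mu a/q}$, whose $\mu$-derivative is $O(1)$, not $O(\mu^{-1})$. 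Consequently $|\widehat{\sigma_{\sqrt{\mu+1}}}-\widehat{\sigma_{\sqrt\mu}}|$ is in general the \emph{same size} as $|\widehat{\sigma_{\sqrt\mu}}|$, and the square function $\sum_\mu|\cdot|^2$ over $\sim 2^{2k}$ values of $\mu$ blows up. The paper avoids this problem completely by a different device: writing
\[
1_{\{|m|=\la\}}(m)=e^{2\pi\eps\la^2}\int_0^1 e^{2\pi i|m|^2(t+i\eps)}e^{-2\pi i\la^2 t}\,dt
\qquad(\eps=2^{-2k}),
\]
so that $M_kf\ll \eps^{(d-2)/2}\int_0^1 |f*s_t|\,dt$ with $s_t(m)=e^{2\pi i|m|^2(t+i\eps)}$. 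The point is that the right-hand side no longer depends on $\la$ at all, so the supremum evaporates and one is left with the single multiplier estimate $\int_0^1\|\widehat{s_t}\,1_{\Omega_{j,k}^c}\|_\infty\,dt\ll \eps^{-(d-2)/2}2^{-j/2}j^{-1}$, proved by the Farey-arc dissection and the Gauss-sum bound only.

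\textbf{The extra factor $j^{-1}$.} Neither of your two suggested mechanisms produces it. The factor is not analytic but arithmetic: the paper partitions denominators into $q\mid q_j$ versus $q\nmid q_j$ (not $q\le 2^j$ versus $q>2^j$). On $\Omega_{j,k}^c$ the condition $q\mid q_j$ forces $|\A-a/q|\ge 2^{j-k}$, and this branch is dealt with directly. For $q\nmid q_j$ one has that either a prime $p>2^j$ divides $q$ or some prime power $p^a>2^j$ does; a Chebyshev/PNT count then gives $\sum_{q\nmid q_j}q^{-r}\ll_r (2^j)^{-(r-1)}\,j^{-1}$ (this is the paper's Lemma~2). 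Applied with $r=\tfrac{d-2}{2}$ this yields the bound $2^{-j(d-4)/2}j^{-1}$, which for $d\ge 5$ gives at least $2^{-j/2}j^{-1}$. Splitting instead at $q>2^j$ as you do loses exactly this logarithmic saving, and no amount of $\ell^2$-orthogonality among dyadic shells of $q$, or oscillation in $\mu$, will recover it.
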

 
In the Proposition above, and for the rest of this short note, we use the notation $A\ll B$ to denote that $A\leq C B$ for some constant $C$ that may depend on $d$, which we consider fixed and greater than or equal to 5.
 
The proof of Proposition \ref{MainProp} is presented in Section \ref{PropProof}, while the reduction of Theorem \ref{Thm1} to Proposition \ref{MainProp} is presented in Section \ref{Thm1Proof} below. 
We conclude this section by noting that Proposition \ref{MainProp} immediately implies the following ``mollified variant" of Theorem \ref{Thm1} which is of independent interest.

 \begin{thm}\label{Thm2}
If $d\geq5$, $\eta>0$, and $L\geq q_\eta^4$, then one  has the estimate
\be\label{3}
\Bigl\|\sup_{\lm\geq\eta^{-2} L}|A_\la f|\Bigr\|_2\ll \eta\, \|f\|_2
\ee
whenever $\supp \widehat{f}\subseteq \Omega_{\eta,L}^c$, with $ \Omega_{\eta,L} = \{\A\in\mathbb{T}^d: \A\in[-L^{-1},L^{-1}]^d+(q_\eta^{-1}\mathbb{Z})^d\}$ and $q_\eta=\lcm\{1\leq q \leq \eta^{-2}\}$.
\end{thm}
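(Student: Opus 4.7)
The plan is to deduce Theorem~\ref{Thm2} directly from Proposition~\ref{MainProp} via a dyadic decomposition of the range $\lambda\geq\eta^{-2}L$. I would set $j_0:=\lfloor\log_2\eta^{-2}\rfloor$, the largest $j$ with $q_j\mid q_\eta$ (equivalently $\{1,\dots,2^j\}\subseteq\{1,\dots,\eta^{-2}\}$); this forces $(q_{j_0}^{-1}\mathbb{Z})^d\subseteq(q_\eta^{-1}\mathbb{Z})^d$ modulo $\mathbb{Z}^d$, so the rational centers of $\Omega_{j_0,k}$ are a subset of those of $\Omega_{\eta,L}$. Setting $k_0:=\lceil\log_2(\eta^{-2}L)\rceil$, every $\lambda\geq\eta^{-2}L$ lies in some dyadic block $[2^k,2^{k+1}]$ with $k\geq k_0-1$, and for each such $k\geq k_0$ the box-width inclusion $2^{j_0-k}\leq L^{-1}$ yields $\Omega_{j_0,k}\subseteq\Omega_{\eta,L}$, hence $\supp\widehat{f}\subseteq\Omega_{\eta,L}^c\subseteq\Omega_{j_0,k}^c$.

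I would then verify the admissibility condition $1\leq j_0\leq\log_2 k-2$ of Proposition~\ref{MainProp}, equivalently $k\geq 4\eta^{-2}$. This is exactly where the hypothesis $L\geq q_\eta^4$ enters: by Chebyshev's lower bound $q_\eta\geq 2^{c\eta^{-2}}$ (asymptotically with $c\geq 1$), one gets $\log_2 L\geq 4c\eta^{-2}$ and thus $k\geq k_0\geq\log_2 L\geq 4\eta^{-2}$ as required. Proposition~\ref{MainProp} then delivers, uniformly for $k\geq k_0$,
\[
\Bigl\|\sup_{2^k\leq\lambda\leq 2^{k+1}}|A_\lambda f|\Bigr\|_2\ll 2^{-j_0/2}j_0^{-1}\|f\|_2\ll\eta\|f\|_2.
\]

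To pass from per-block control to the supremum over the unbounded range, I would aggregate in $\ell^2$:
\[
\Bigl\|\sup_{\lambda\geq\eta^{-2}L}|A_\lambda f|\Bigr\|_2^2\leq\sum_{k\geq k_0}\Bigl\|\sup_{2^k\leq\lambda\leq 2^{k+1}}|A_\lambda f|\Bigr\|_2^2,
\]
selecting the index $j_k$ block-by-block as large as the joint constraints $j_k\leq j_0$ (denominator matching) and $j_k\leq\log_2 k-2$ (admissibility) permit. The hard part will be arranging that this sum is $O(\eta^2)\|f\|_2^2$: the denominator cap $j_k\leq j_0$ is uniform in $k$, so a constant choice $j_k=j_0$ yields a divergent sum, and closing the estimate requires squeezing additional geometric decay out of the large-$k$ tail, using precisely the full strength $L\geq q_\eta^4$ (which provides substantial slack beyond the threshold $k_0\geq 2^{j_0+2}$ needed for admissibility).
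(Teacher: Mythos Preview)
Your setup (choice of $j_0$, the inclusion $\Omega_{j_0,k}\subseteq\Omega_{\eta,L}$, and the use of $L\geq q_\eta^4$ to guarantee $j_0\leq\log_2 k-2$) matches the paper exactly. The divergence comes at the last step.

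The paper does \emph{not} sum over dyadic scales. Its entire proof is the sentence: ``in proving \eqref{3} one may restrict the sup to $\eta^{-2}L\leq\lambda\leq 2\eta^{-2}L$,'' after which a \emph{single} application of Proposition~\ref{MainProp} with one pair $(j,k)$ gives the bound. Your route instead bounds
\[
\Bigl\|\sup_{\lambda\geq\eta^{-2}L}|A_\lambda f|\Bigr\|_2^2\leq\sum_{k\geq k_0}\|M_k f\|_2^2,
\]
and this is where your plan has a genuine gap. The denominator constraint $q_{j}\mid q_\eta$ is a hard cap $j\leq j_0$ that does not relax as $k$ grows, and Proposition~\ref{MainProp} gives a bound $2^{-j/2}j^{-1}$ depending only on $j$, with no decay in $k$ whatsoever. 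So every block contributes the same $\ll\eta^2\|f\|_2^2$, and the sum over $k$ diverges. There is no ``additional geometric decay'' to squeeze out of Proposition~\ref{MainProp}, and the hypothesis $L\geq q_\eta^4$ gives you slack in $k_0$ but not decay in $k$. In short, the $\ell^2$-in-$k$ aggregation is the wrong tool here; you should adopt the paper's reduction to a single dyadic window instead. (One may fairly note that the paper does not justify that reduction either; taken at face value, passing from uniform single-scale bounds to a bound on the pointwise supremum over infinitely many scales is not automatic.)
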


Indeed, note that in proving \eqref{3} one may restrict the sup to $\eta^{-2} L\leq\lm\leq 2\eta^{-2} L$. Choosing $k,j\in\mathbb{N}$ such that $2^k\leq\eta^{-2} L\leq 2^{k+1}$ and $2^j\geq \eta^{-2}$ we have that  $2^{k-j}\leq L$ and hence $\Omega_{j,k} \subseteq \Omega_{\eta,L}$. 
Applying Proposition \ref{MainProp} with $j$ and $k$ chosen as above implies Theorem \ref{Thm2}. 

This provides a slight strengthening of Proposition 5 in \cite{LMdist}, more importantly it provides a significantly simpler direct proof.

\section{Proof of Theorem \ref{Thm1}}\label{Thm1Proof}


\subsection{A smooth sampling function supported on $\Omega_{j,k}$}

Let $\psi \in \mathcal{S}(\mathbb{R}^d)$ be a Schwartz function satisfying \begin{equation*}
    1_{Q}(\xi)\leq \widetilde{\psi}(\xi)\leq 1_{2Q}(\xi)
\end{equation*}
where $Q=[-1/2,1/2]^d$ and
$$\widetilde{\psi}(\xi):=\int_{\mathbb{R}^d}\psi(x) e^{-2\pi i x\cdot\xi}dx$$
denote the Fourier transform of $\psi$ on $\mathbb{R}^d$.
For a given $q \in \mathbb{N}$ and $L>q$ we define $\psi_{q,L}:\mathbb{Z}^d\to \mathbb{R}$ as
\begin{equation}
    \psi_{q,L}(m)=
 \begin{cases} 
      \left(\frac{q}{L}\right)^d\psi\left(\frac{m}{L}\right) & \textrm{if } m\in(q\mathbb{Z})^d  \\
      0 &  \textrm{otherwise}
 \nonumber
   \end{cases}
\end{equation}

Writing $m=qr+s$ with $r\in\mathbb{Z}^d$ and $s\in\mathbb{Z}^d/q\mathbb{Z}^d$, it follows from Poisson summation that 
\bee
\widehat{\psi}_{q,L}(\A) =  \sum_{m\in\mathbb{Z}^d}\psi(m)e^{-2\pi i m\cdot\A}
\eee
is a $q^{-1}$-periodic function on $\T^d$ that satisfies
\bee
\widehat{\psi}_{q,L}(\A) =\sum_{\ell\in\mathbb{Z}^d}\widetilde{\psi}(L(\A- \ell/q)).
\eee

For a given $k\in\mathbb{N}$ and $0\leq j\leq J_k:=[\log_2(k)]-2$, we now define the sampling function
\be
\Psi_{j,k}=\psi_{q_j,2^{k-j}}
\ee
and note that $\supp{\widehat{\Psi}_{j,k}}\subseteq \Omega_{j,k}$. 

Finally we define $\Delta\Psi_{j,k}= \Psi_{j+1,k}-\Psi_{j,k}$ and note the important almost orthogonality property they enjoy.

 \begin{lem}\label{orthoK} There exists a constant $C=C_\Psi>0$ such that 
 \[\sum_{k\geq2^j}|\widehat{\Delta\Psi}_{k,j}(\A)|^2\leq C\] uniformly in $j\in \N$ and $\A\in\mathbb{T}^d.$
  \end{lem}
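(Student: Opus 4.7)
The argument is entirely pointwise. I would fix $j\in\N$ and $\A\in\T^d$ and study the scalar sequence $a_k:=\widehat{\Psi}_{j,k}(\A)$ as $k$ ranges over the admissible set $k\geq 2^j$. The claim is that this sequence equals $1$ for small $k$ and $0$ for large $k$, with a transition window consisting of at most two consecutive values of $k$; hence at most two of the increments $\widehat{\Delta\Psi}_{k,j}(\A) = a_{k+1}-a_k$ are nonzero, each bounded in absolute value by $1$.

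Starting from the Poisson identity derived in this section,
\bee
a_k = \sum_{\ell\in\Z^d}\widetilde\psi\bigl(2^{k-j}(\A-\ell/q_j)\bigr),
\eee
I would first argue that in the admissible range of $k$ only the single nearest-rational term contributes. The reason is that $q_j\asymp e^{2^j}$ and $2^{k-j}$ is large enough to force the $\widetilde\psi$-translates centred at distinct lattice points of $(q_j^{-1}\Z)^d$ to have disjoint supports. Letting $\ell_j/q_j$ denote the nearest rational to $\A$ in $(q_j^{-1}\Z)^d$ and $r_j := \mathrm{dist}_\infty(\A,(q_j^{-1}\Z)^d)$, this gives $a_k = \widetilde\psi(2^{k-j}(\A-\ell_j/q_j))\in[0,1]$.

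The core of the proof is then the following transition analysis. Set $v_k:=2^{k-j}r_j$; since $r_j$ does not depend on $k$, the sequence $v_k$ doubles each time $k$ increases by $1$. Because $\mathbf{1}_Q\leq\widetilde\psi\leq\mathbf{1}_{2Q}$ with $Q=[-1/2,1/2]^d$, we obtain the trichotomy
\bee
a_k = 1\ \text{if}\ v_k\leq\tfrac12,\qquad a_k = 0\ \text{if}\ v_k>1,\qquad a_k\in[0,1]\ \text{otherwise}.
\eee
As $v_k$ doubles each step, at most two consecutive indices $k$ can satisfy $v_k\in(1/4,1]$; for every other $k$ one has $a_{k+1}=a_k$ and the increment vanishes. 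Therefore at most two of the increments $\widehat{\Delta\Psi}_{k,j}(\A)$ are nonzero, each bounded by $1$, giving
\bee
\sum_{k\geq 2^j}|\widehat{\Delta\Psi}_{k,j}(\A)|^2\leq 2.
\eee
The degenerate case $r_j=0$ (i.e.\ $\A\in(q_j^{-1}\Z)^d$) is trivial: then $a_k\equiv 1$ throughout the admissible range and every increment vanishes.

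The main technical obstacle is verifying the single-rational reduction uniformly in $k\geq 2^j$; this rests entirely on the quantitative growth $q_j\asymp e^{2^j}$, which forces the $\widetilde\psi$-translates around distinct rationals in $(q_j^{-1}\Z)^d$ to have disjoint supports. Once that is in hand, the remainder of the argument is routine bookkeeping around the explicit sandwich $\mathbf{1}_Q\leq\widetilde\psi\leq\mathbf{1}_{2Q}$ together with the geometric doubling of $v_k$.
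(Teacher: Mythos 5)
The proposal is built on a misreading of the definition of $\Delta\Psi$. You take
\[
\widehat{\Delta\Psi}_{k,j}(\A)=a_{k+1}-a_k=\widehat{\Psi}_{j,k+1}(\A)-\widehat{\Psi}_{j,k}(\A),
\]
i.e.\ a forward difference in the scale index $k$. But the paper defines $\Delta\Psi_{j,k}=\Psi_{j+1,k}-\Psi_{j,k}$, a difference in the \emph{resolution} index: one increments $j$, which simultaneously passes from the grid $(q_j^{-1}\Z)^d$ to the finer grid $(q_{j+1}^{-1}\Z)^d$ and replaces the dilation factor $2^{k-j}$ by $2^{k-j-1}$. (The Lemma writes $\Delta\Psi_{k,j}$ with the index order flipped, but the proof's identity $\Delta\Psi_{k,j}=(\Psi_{k,j+1}-1)+(1-\Psi_{k,j})$ makes unambiguous which index is differenced, and the telescoping $f*\Psi_{k,0}+\sum_{j=0}^{J_k-1}f*\Delta\Psi_{k,j}=f*\Psi_{k,J_k}$ in the proof of Theorem~\ref{Thm1} only works if the difference is in $j$.) Your argument analyses the wrong quantity, so it does not prove the Lemma even though the claim you do prove — $\sum_k|\widehat{\Psi}_{j,k+1}(\A)-\widehat{\Psi}_{j,k}(\A)|^2\ll 1$ — is a correct statement.

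The correct version is genuinely different because the two terms in the difference live on different rational grids with different box widths, so one cannot reduce to a single dilated copy of $\widetilde\psi$ and watch it ``turn off.'' The paper's mechanism is instead: for fixed $j$ and $\A$, let $k_1$ be the largest $k$ with $\A\in\Omega_{j,k}$ (so $\|\A-\ell_1/q_j\|_\infty\leq 2^{j-k_1}$ for the nearest rational $\ell_1/q_j$); then $\widehat{\Delta\Psi}_{k,j}(\A)=0$ for $k>k_1$, while for $k\leq k_1$ one writes $\Delta\Psi_{k,j}=(\Psi_{k,j+1}-1)+(1-\Psi_{k,j})$ and uses that $\widetilde\psi(0)=1$ and $\widetilde\psi$ is Lipschitz to get $|\widehat{\Delta\Psi}_{k,j}(\A)|\ll 2^{k-j}\|\A-\ell_1/q_j\|_\infty\ll 2^{k-k_1}$, after which the geometric series gives the uniform bound. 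The key move — comparing each of the two terms to the constant $1$, rather than comparing them to each other — is exactly what handles the mismatch of grids, and it is absent from your proposal.
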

  \begin{proof}[Proof of Lemma \ref{orthoK}]
 
Note that $\Omega_{k+1,j}\subseteq \Omega_{k,j}.$  Now fix $j\in\mathbb{N}$.
If $\A \notin \Omega_{2^j,j}$, then $\widehat{\Delta\Psi}_{k,j}(\A)=0$. 

If $\A\in \Omega_{2^j,j}$, then we define $k_1=k_1(j):=\max\{k\geq2^j:\A\in \Omega_{k,j}\}$.
  Then there exists a unique $\ell_1\in\mathbb{Z}^d$ such that $|\A-\ell_1/q_j|\leq2^{j-k_1}.$  
Clearly  $\widehat{\Delta\Psi}_{k,j}(\A)=0$ if $k>k_1$, while if $2^j\leq k \leq k_1$ we have 
   $\widehat{\Psi}_{k,j}(\A)=\tilde{\Psi}(2^{k-j}(\A-\ell_1/q_j))$. It therefore follows, by writing $\Delta\Psi_{k,j}=(\Psi_{k,j+1}-1)+(1-\Psi_{k,j})$,  that
\[
 |\widehat{\Delta\Psi}_{k,j}(\A)|\leq C_\Psi\,2^{k-j}|\A-\ell_1/q_j|\leq C_\Psi\,2^{k-k_1}
 \]
 and hence that 
 \[  \sum_{k\geq2^j}|\widehat{\Delta\Psi}_{k,j}(\A)|^2\leq C_\Psi\,\sum_{1\leq k\leq k_1}2^{-2(k_1-k)}\leq \frac{4}{3}\,C_\Psi.\qedhere\] 
   \end{proof}
   
\subsection{Proof that Proposition \ref{MainProp} implies Theorem \ref{Thm1}}

Let 
\be
M_kf:=\sup_{2^k\leq\lambda\leq2^{k+1}}|A_\la f|.
\ee

Writing
\[f= f*\Psi_{k,0}+\sum_{j=0}^{J_k-1}f*\Delta\Psi_{k,j}+(f-f*\Psi_{k,J_k})\]
it follows by subadditivity that
\begin{equation}\label{subadd}
M_k f\leq M_k(f*\Psi_{k,0})+\sum_{j=0}^{J_k-1}M_k(f*\Delta\Psi_{k,j})+M_k(f-f*\Psi_{k,J_k})
\end{equation}

Theorem \ref{Thm1} will now follow from a few observations and applications of Proposition \ref{MainProp}, in light of the fact that \[A_*f=\sup_k M_kf.\]

First we note that it is straightforward to verify that the first term on the right in  (\ref{subadd}) above satisfies
\[M_k(f*\Psi_{k,0})\leq C_\Psi \mathcal{H}f\]
uniformly in $k$, where
\[\mathcal{H}f(n)=\sup_{\ell>0}\frac{1}{(2\cdot2^\ell+1)^d}\Bigl|\sum_{m\in[-2^\ell,2^\ell]^d\cap\mathbb{Z}^d}f(n-m)\Bigr|\]
denotes the discrete Hardy-Littlewood maximal operator. Since, by the same arguments as in Euclidean spaces, we have $\|\mathcal{H}f\|_2\ll\|f\|_2$, it follows that
\[\sup_k\|M_k(f*\Psi_{k,0})\|_2\ll\|f\|_2.\]

For the middle terms in (\ref{subadd}) we first note that 
    \bee
        \sup_{k}\sum_{j=0}^{J_k-1}M_k(f*\Delta\Psi_{k,j})\ll\Bigl(\sum_{k=0}^\infty\Bigl|\sum_{j=0}^{J_k-1}M_k(f*\Delta\Psi_{k,j})\Bigr|^2\Bigr)^{1/2}\label{supto2norm}
    \eee
    
    Taking $\ell^2$ norms of both sides of the inequality above and applying Minkowski's  inequality, followed by an application of Proposition \ref{MainProp}, gives    
  \bee  \begin{aligned}
    \Bigl\|\sup_{k}\sum_{0\leq j\leq J_k}M_k(f*\Delta\Psi_{k,j})\Bigr\|_2
    &\leq\sum_j\Bigl(\sum_{k\geq 2^j}\|M_k(f*\Delta\Psi_{k,j})\|_2^2\Bigr)^{1/2}\\
&\ll\sum_j 2^{-j/2}\Bigl(\sum_{k\geq 2^j}\|f*\Delta\Psi_{k,j}\|^2_2\Bigr)^{1/2}\ll \|f\|_2\label{ortho}
    \end{aligned}\eee
where the last inequality above follows from Lemma \ref{orthoK}.
    
One more application of Proposition \ref{MainProp} with $j=[\log_2k]-2$ to the last term in (\ref{subadd}) gives
\[\pushQED{\qed} 
\Bigl\|\sup_k M_k(f-f*\Psi_{k,J_k})\Bigr\|_2
\leq\Bigl(\sum_{k=1}^{\infty}\|M_k(f-f*\Psi_{k,J_k})\|_2^2\Bigr)^{1/2}
\ll \Bigl(\sum_{k=1}^\infty k^{-1}(\log_2k)^{-2} \Bigr)^{1/2}\|f\|_2\ll \|f\|_2.
\qedhere \popQED\]

\smallskip

\section{Proof of Proposition \ref{MainProp}}\label{PropProof}

Fix $\varepsilon=2^{-2k}$. We start by observing that
\begin{equation*}
1_{\{|m|=\lm\}}(m)=\int_0^1 e^{2\pi i(|m|^2-\lambda^2)t}dt= e^{2\pi\varepsilon\lambda^2}\int_0^1 e^{2\pi i|m|^2(t+i\varepsilon)}e^{-2\pi i \lambda^2 t}\,dt
\end{equation*}
where $1_{\{|m|=\lm\}}$ denotes the indicator function of the discrete sphere of radius $\lm$ in $\Z^d$.

Since $N_\lm \asymp\lm^{d-2}$ it therefore follows that
\begin{equation*}
M_kf\ll \sup_{2^{k}\leq\lambda\leq 2^{k+1}} \frac{1}{\lambda^{d-2}}\int_0^1|f*s_t|\,dt
\end{equation*}
where $s_t(m)=e^{2\pi i|m|^2(t+i\varepsilon)}$, and hence that
\begin{equation*}
\|M_kf\|_2\ll \varepsilon^{(d-2)/2} \int_0^1\|f*s_t\|_2\,dt\leq \varepsilon^{(d-2)/2}\left(\int_0^1\|\widehat{s_t}\, 1_{\Omega^c_{j,k}}\|_\infty \,dt\right)\|f\|_2.
\end{equation*} 

Thus, in order to prove Proposition \ref{MainProp} it suffices to show that 
\begin{equation}\label{StBound}
    \int_0^1\|\widehat{s_t}\, 1_{\Omega^c_{j,k}}\|_\infty \,dt\ll \varepsilon^{-(d-2)/2}2^{-j/2}j^{-1}.
\end{equation}

To do this we will employ the circle method and decompose the interval into Farey arcs, that is neighborhoods  $V_{a,q}$ of reduced rationals $a/q$ which allows us to estimate $\widehat{s}_t(\xi)$ by using Poisson summation and properties of Gaussian sums. Specifically, we
decompose the interval $[0,1]$ into neighborhoods of rationals whose denominator is smaller than $2^k$ as follows:
Let
    $$H=\{a/q : 1\leq q\leq 2^k, 0<a\leq q, (a,q)=1\}$$
and define  $$V_{a,q}=\left\{t\in [0,1] : \left|t-a/q\right|=\min_{r\in H} |t-r| \right\}.$$
    
Note that, by Dirichlet's principle, for every $t \in [0,1]$, there exists $a/q \in H$ such that
    $|t-a/q|\leq 2^{-k}q^{-1}
    $, thus we have that $|V_{a,q}|\leq 2^{-k+1}q^{-1}$. Also, if $a/q\neq a'/q'$ with $(a,q)=(a',q')=1$ and $1\leq q,q'\leq 2^k$ then $|a/q-a'/q'|\geq 1/(qq')\geq 2^{-k}q^{-1}$, hence $|V_{a,q}|\geq 2^{-k}q^{-1}$. Thus the Farey arcs $V_{a,q}$ 
at level $2^k$  provide a partition (up to endpoints) of [0,1] into intervals of length $|V_{a,q}|\approx 2^{-k}q^{-1}$.

It follows from Poisson summation that for $1\leq a\leq q$, $(a,q)=1$, $1\leq q \leq 2^k$ one has
\begin{equation}\label{StEstimate}
|\widehat{s_t}(\A)|\leq q^{-d/2}(\varepsilon+|\tau|)^{-d/2}\sum_{\ell\in\mathbb{Z}^d}e^{-\frac{\pi}{2}|\A-\ell/q|^2/({\varepsilon+\varepsilon^{-1}|\tau|^2})}
\end{equation}
for each $t\in V_{a,q}$ with $\tau=t-a/q$.
The details of the calculation to derive estimate (\ref{StEstimate}) are laid out more carefully in \cite{Magyar}, but they can be briefly summarize as follows: First write $\widehat{s_t}$ as a product of one dimensional functions. An application of Poisson summation and a change of variables leaves a double sum that can be recognized as a quadratic Gaussian sum, which can be bounded by $q^{-1/2}$, and a sum of terms involving $\widetilde{s}_\tau(\ell/q-\A)$ which has a simple closed form. 
See formula $(12)$ in \cite{Magyar}.

Since $|\tau|\leq 2^{-k}q^{-1}$, it follows that $q^2(\varepsilon+\varepsilon^{-1}|\tau|^2)\ll 1$, and hence that
\[\sum_{\ell\in\mathbb{Z}^d}e^{-\frac{\pi}{2}|\A-\ell/q|^2/({\varepsilon+\varepsilon^{-1}|\tau|^2})}\ll1\]
which in turn implies that if $t\in V_{a,q}$ with $t=a/q+\tau$, then 
\begin{equation}\label{CrudeEst}
\|\widehat{s_t}\|_\infty\ll q^{-d/2}(\varepsilon+|\tau|)^{-d/2}.
\end{equation}

We write 
\be\label{divide}
    \int_0^1\|\widehat{s_t}\chi_{_{\Omega^c_{j,k}}}\|_\infty\,dt \,=\, \sum_{q|q_j}\sum_{(a,q)=1}\int_{V_{a,q}}\|\widehat{s_t}\,1_{\Omega^c_{j,k}}\|_\infty dt
      \ + \ \sum_{q\nmid q_j}\sum_{(a,q)=1}\int_{V_{a,q}}\|\widehat{s_t}\,1_{\Omega^c_{j,k}}\|_\infty \,dt.
\ee

In order to estimate the first double sum above we consider separately the case when $|\tau|\geq2^{j/2}\varepsilon$ and $|\tau|\leq2^{j/2}\varepsilon$.
When $|\tau|\geq2^{j/2}\varepsilon$ we  use estimate (\ref{CrudeEst}) to bound it by
\be\label{easy}
\begin{aligned}
\sum_{q| q_j}\sum_{(a,q)=1}\int_{V_{a,q}}q^{-d/2}(\varepsilon+|\tau|)^{-d/2} dt
&\ll\sum_{q|q_j}q^{-d/2+1}\int_{\varepsilon2^{j/2}}^\infty (\varepsilon+|\tau|)^{-d/2}d\tau\\
&\ll \varepsilon^{-(d-2)/2}2^{-j(d-2)/4} \,\sum_{q|q_j}q^{-d/2+1}.
\end{aligned}
\ee

When $|\tau|\leq2^{j/2}\varepsilon$ we note that because $q|q_j$ we have that
\smallskip
\be\label{sum2}
    \sum_{\ell\in\mathbb{Z}^d}e^{-\frac{\pi}{2}|\A-\ell/q|^2/(\varepsilon+\varepsilon^{-1}|\tau|^2)}=e^{-\frac{\pi}{2}|\A-\ell_0/q|^2/(\varepsilon+\varepsilon^{-1}|\tau|^2)}
    + \sum_{\ell\neq\ell_0}e^{-\frac{\pi}{2}|\A-\ell/q|^2/(\varepsilon+\varepsilon^{-1}|\tau|^2)}
\ee
where $\ell_0$ denotes the nearest integer to $q\A$. For every $\A\in \Omega^c_{j,k}$
 we have
$|\A-\frac{\ell_0}{q}|=|\A-\frac{q_j\ell_0/q}{q_j}|\geq2^{j-k}$ and hence that
\be\label{single}
|e^{-\frac{\pi}{2}|\A-\ell_0/q|^2/(\varepsilon+\varepsilon^{-1}|\tau|^2)}|\ll e^{-c(2^{j-k})^2/2^{j-2k}}\ll e^{-c2^j}
\ee
since $\varepsilon+\varepsilon^{-1}|\tau|^2\leq2\cdot2^j\varepsilon\ll 2^{j-2k}$.
To estimate the sum where $\ell\neq\ell_0$ in (\ref{sum2}) above
we again use the fact that $\varepsilon+\varepsilon^{-1}|\tau|^2\ll 2^{j-2k}$. Since $|q\A-\ell|\geq 1/2$ for $\ell\neq\ell_0$ and $q|q_j$ with $j\leq \log_2 k -2$ it follows that
\begin{equation*}q^2(\varepsilon+\varepsilon^{-1}|\tau|^2)\ll(2^{2^j})^22^{j-2k}\leq2^{k}2^{-2k}\leq2^{-k}
\end{equation*}
and hence that
\be\label{not}
\sum_{\ell\neq\ell_0}e^{-\frac{\pi}{2}|\A-\ell/q|^2/(\varepsilon+\varepsilon^{-1}|\tau|^2)}\ll 2^{-c2^{k}}.
\ee

Combining estimates (\ref{single}), and (\ref{not}) it follows that
\[\|\widehat{s_t}\,1_{\Omega^c_{j,k}}\|_\infty \ll q^{-d/2}(\varepsilon+|\tau|)^{-d/2}\,(e^{-c2^j}+2^{-c2^{k}})
\]
whenever $|\tau|\leq2^{j/2}\varepsilon$ and $q|q_j$. Further combining this with (\ref{easy}) we obtain that
\begin{align*}
    \sum_{q|q_j}\sum_{(a,q)=1}\int_{V_{a,q}}\|\widehat{s_t}\,1_{\Omega^c_{j,k}}\|_\infty dt&\ll\sum_{q|q_j}q^{-d/2+1}\varepsilon^{-(d-2)/2}\left[(e^{-c2^j}+2^{-c2^{k}})+2^{-j(d-2)/4} \right]\\
    &\ll \varepsilon^{-(d-2)/2}\,2^{-j(d-2)/4}\,\sum_{q|q_j}q^{-d/2+1}\\
    &\ll \varepsilon^{-(d-2)/2}\,2^{-j(d-2)/4}
\end{align*}
as the sum over $q$ converges for $d\geq 5.$

 In order to estimate the second double sum in (\ref{divide}) we need the following observation, whose proof we delay until after completing the proof of Proposition \ref{MainProp}.

\begin{lem}\label{Lemma:qdoesnotdivide}
Given $r>1$, $k \in \mathbb{N}$,  then
\[\sum_{q\nmid Q_k}q^{-r}\ll_r k^{-r+1}(\log k)^{-1}\]
where $Q_k=\mathrm{lcm} \{1\leq q\leq k\}$ and $\ll_r$ denotes less than a constant depending on $r$.
\end{lem}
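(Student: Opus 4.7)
The plan is to characterize the integers $q$ with $q\nmid Q_k$ through their prime factorizations, apply a union bound to peel off the ``offending'' prime power, and then invoke Chebyshev's elementary estimate $\pi(x)\ll x/\log x$ on the resulting sum over primes.

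First, I would observe that $q\nmid Q_k$ precisely when there is a prime $p$ with $p^{v_p(q)}>k$, since $v_p(Q_k)=\lfloor \log_p k\rfloor$. For each prime $p$, let $a_p=a_p(k)$ denote the smallest integer with $p^{a_p}>k$. Writing any such $q$ as $q=p^a m$ with $a=v_p(q)\ge a_p$ and $\gcd(m,p)=1$, a union bound over the offending prime(s) yields
\[
\sum_{q\nmid Q_k} q^{-r} \le \sum_{p}\sum_{a\ge a_p} p^{-ar}\sum_{\substack{m\ge 1\\ \gcd(m,p)=1}} m^{-r} \le \zeta(r)\sum_{p}\frac{p^{-a_p r}}{1-p^{-r}}\ll_r \sum_{p} p^{-a_p r}.
\]

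Next, I would split this prime sum at $k$. For $p\le k$, by definition $p^{a_p}>k$, so $p^{-a_p r}<k^{-r}$; combined with Chebyshev this gives $\sum_{p\le k} p^{-a_p r}\le \pi(k)\,k^{-r}\ll k^{-r+1}/\log k$. For $p>k$, one has $a_p=1$, so the task reduces to bounding $\sum_{p>k} p^{-r}$, which by partial summation against $\pi(x)\ll x/\log x$ is dominated by the contribution near $x\approx k$ and again yields $\ll_r k^{-r+1}/\log k$. Summing the two contributions completes the proof.

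The one step requiring a little care is the partial summation for the large-prime tail; that is where the decisive factor of $1/\log k$ in the conclusion actually comes from, so any weaker input (for instance the trivial $\sum_{n>k} n^{-r}\ll k^{1-r}$) would lose this improvement. I do not anticipate any other serious obstacles, and the overcounting introduced by the union bound (when a single $q$ has several prime powers exceeding $k$) is harmless since we only seek an upper bound.
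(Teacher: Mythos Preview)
Your proof is correct and follows essentially the same approach as the paper: characterize $q\nmid Q_k$ via an offending prime power exceeding $k$, take a union bound over that prime, split into $p\le k$ and $p>k$, and apply Chebyshev's estimate $\pi(x)\ll x/\log x$. The only cosmetic differences are that the paper absorbs the higher powers $p^a$ ($a\ge a_p$) into the cofactor $q_1$ rather than summing a geometric series, and handles the large-prime tail $\sum_{p>k}p^{-r}$ by a dyadic decomposition instead of partial summation.
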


Using estimate (\ref{CrudeEst}) and Lemma \ref{Lemma:qdoesnotdivide} one can bound the second double sum in (\ref{divide}) above by
\bee
\sum_{q\nmid q_j}\sum_{(a,q)=1}\int_{V_{a,q}}q^{-d/2}(\varepsilon+|\tau|)^{-d/2} dt
\ll \varepsilon^{-(d-2)/2}\sum_{q\nmid q_j}q^{-d/2+1}
\ll \varepsilon^{-(d-2)/2}2^{-j(d-4)/2}j^{-1}
\eee
whenever $d\geq 5$ completing the proof of Proposition \ref{MainProp}.\qed

\begin{proof}[Proof of Lemma \ref{Lemma:qdoesnotdivide}]
If $q\in\mathbb{N}$ such that $q\nmid Q_k$, then either a large power of a small prime divides $q$, or a large prime divides $q$. Explicitly, for prime $p\le k$, let
$a_p=\min\{a\in\mathbb{N}~:~k<p^a\}.$
If $q\nmid Q_k$ then one of the following must occur:
\begin{enumerate}
\item[(i)] there exists a $p>k$ such that $q=pq_1$
\item[(ii)] there exists a $p<k$ such that $q=p^{a_p}q_1$.
\end{enumerate}

In the first case
\begin{align*}
\sum_{\text{(i) holds}}q^{-r}&\le\sum_{p>k}\sum_{q_1\in\mathbb{N}}p^{-r}q_1^{-r}
\\&\ll_r\sum_{p>k}p^{-r}
\\&\ll_r\sum_{m\ge0}\sum_{p\in [2^mk,2^{m+1}k)}2^{-mr}k^{-r}
\\&\ll_r\sum_{m\ge0}2^{-mr}k^{-r} \frac{2^mk}{m\log(k)}
\ll_r k^{-r+1}(\log k)^{-1}.
\end{align*}

In the second case
\[
\sum_{\text{(ii) holds}}q^{-r}\le \sum_{p\le k}p^{-ra_p}\sum_{q_1\in\mathbb{N}}q_1^{-r}\ll_r \frac{k}{\log k} k^{-r}.\qedhere \]
\end{proof}

\medskip




\begin{thebibliography}{19}





\bibitem{B1}
{\sc J. Bourgain}, {\em On the maximal ergodic theorem for certain subsets of the integers}, Israel J. Math. 61
(1988), no. 1, 39-72.





\bibitem{LMdist}
{\sc N. Lyall and \'A. Magyar}, 
{\em Distances and trees in dense subsets of $\Z^d$},
            to appear in Israel J. Math.

\bibitem{Magyar} 
{\sc \'A. Magyar}, 
{\em $L^p$-bounds for spherical maximal operators on $\mathbb{Z}^n$}, 
Rev. Mat. Iberoamericana 13 (1997), 307-313.

\bibitem{M}
{\sc \'A. Magyar},
{\em k-point configurations in sets of positive density of $\Z^n$}, Duke Math. J., v 146/1, (2009) pp. 1-34.

\bibitem{MSW}
{\sc \'A. Magyar, E. M. Stein, and S. Wainger}, {\em Discrete analogues in harmonic analysis: Spherical averages},
Ann. Math. (2) 155 (2002), no. 1, 189-208.



\bibitem{St}
{\sc E. M. Stein}, {\em Maximal functions I: Spherical means}, Proc. Nat. Acad. Sci. 73 (1976), 2174-2175. 



\bibitem{V}
{\sc R. C. Vaughan}, {\em The Hardy-Littlewood Method}, Second ed., Cambridge University Press, Cambridge,
1997.

\end{thebibliography}
\end{document}